\newtheorem{theorem}{Theorem}[section]
\theoremstyle{definition}
\newtheorem{definition}[theorem]{Definition}
\newtheorem{example}[theorem]{Example}
\theoremstyle{remark}
\newtheorem{remark}[theorem]{Remark}
\begin{document}

\begin{frontmatter}

\title{On forced oscillations of a constrained relativistic particle}

\author{Ivan Polekhin}


\ead{ivanpolekhin@gmail.com}



\begin{abstract}
We consider the forced motion of a relativistic particle constrained on a curve and present sufficient conditions for periodic oscillations by means of an illustrative geometrical approach. Obtained result is illustrated by a few examples including the forced relativistic pendulum.
\end{abstract}

\begin{keyword}
periodic solution, relativistic forced pendulum, forced relativistic particle
\end{keyword}

\end{frontmatter}


\section{Introduction}
\label{sec1}

The problem considered in this paper originally stems from the work of G.\,Hamel who in 1922 proved \cite{hamel1922erzwungene} that the periodically forced classical pendulum have at least one periodic solution. After that, various authors considered not only classical forced pendulum, but also relativistic approximation of this problem. In particular, in~\cite{torres2008periodic} it was shown that for a forced relativistic pendulum with friction there always exists at least one periodic solution if the period of the external generalized force acting on the massive point is not too large. It is worth mentioning that for a classical forced pendulum with friction general behaviour of solutions is different: for a given period, there always exists an external generalized force such that there are no periodic solutions in the system~\cite{ortega2000non}. One can find a survey concerning forced oscillations in relativistic and classical systems in~\cite{mawhin2010periodic}. For a relativistic pendulum without friction it was also shown that there exists a periodic solution~\cite{brezis2010periodic} and, moreover, there are at least two different periodic solutions in this system~\cite{bereanu2012existence}. 

In the paper, we consider a generalization of the relativistic forced pendulum to the case of a periodically forced relativistic particle constrained on an arbitrary curve --- configuration space is assumed to be one-dimensional --- and present sufficient conditions for the existence of a periodic solution. In the general case, the governing equation of the considered system can be presented, in appropriate units, as follows
\begin{equation}
\label{maineq1}
\frac{d}{dt}\frac{\dot q}{\sqrt{1 - \dot q^2}} = f(t, q, \dot q),
\end{equation}
where $q \in \mathbb{R}/2\pi\mathbb{Z}$ and $f \in C^1(\mathbb{R}/T\mathbb{Z} \times \mathbb{R} \times \mathbb{R}, \mathbb{R})$, $T>0$. Note that equation~(\ref{maineq1}) is also considered in~\cite{bereanu2007existence} and \cite{bereanu2008periodic} in more general form. In particular, sufficient conditions for the existence of a periodic solution are presented.

Results~\cite{torres2008periodic},~\cite{mawhin2010periodic}, ~\cite{brezis2010periodic},~\cite{bereanu2007existence} and \cite{bereanu2008periodic} have been obtained by means of variational technique and Leray-Schauder degree arguments. We present different, purely topological, approach based on a result~\cite{srzednicki2005fixed} by R.\,Srzednicki, K.\,W{\'o}jcik, and P.\,Zgliczy{\'n}ski which allows one to study and prove periodicity in various systems by using finite-dimensional topological arguments and illustrative geometrical constructions. The last proves to be useful in our case since we consider low-dimensional systems.

The result is illustrated by a series of examples including forced relativistic pendulum and its modifications.

\section{Results}

\subsection{Supplementary definitions and results}
\label{sec2}
First, following~\cite{srzednicki2005fixed} we introduce some definitions slightly modified for our use. From now on, we assume that $v \in C^1(\mathbb{R}\times M, TM)$ is a  time-dependent vector-field on a smooth manifold $M$. 
\begin{definition}
For $t_0 \in \mathbb{R}$ and $x_0 \in M$, the map $t \mapsto x(t_0,x_0,t)$ is the solution for the initial value problem for the system $\dot x = v(t, x)$, such that $x(t_0,x_0,t_0)=x_0$.
\end{definition}
\begin{definition}
Let $W \subset \mathbb{R} \times M$. Define the {exit set} $W^-$ as follows. A point $(t_0,x_0)$ is in $W^-$ if there exists $\delta>0$ such that $(t+t_0, x(t_0,x_0,t+t_0)) \notin W$ for all $t \in (0,\delta)$.
\end{definition}
\begin{definition}
We call $W \subset \mathbb{R}\times M$ a {Wa\.{z}ewski block} for the system $\dot x = v(t,x)$ if $W$ and $W^-$ are compact.
\end{definition}
 Now introduce some notations. By $\pi_1$ and $\pi_2$ we denote the projections of $\mathbb{R}\times M$ onto $\mathbb{R}$ and $M$ respectively. If $Z \subset \mathbb{R}\times M$, $t\in\mathbb{R}$, then we denote
\begin{equation*}
Z_t=\{z \in M \colon (t,z) \in Z\}.
\end{equation*}
\begin{definition}
A set $W \subset [a,b] \times M$ is called a segment over $[a,b]$ if it is a block with respect to the system $\dot x = v(t,x)$ and the following conditions hold:
\begin{itemize}
\item there exists a compact subset $W^{--}$ of $W^-$ called the essential exit set such that
\begin{equation*}
W^-=W^{--}\cup(\{b\}\times W_b),\quad W^-\cap([a,b)\times M) \subset W^{--},
\end{equation*}
\item there exists a homeomorphism $h\colon [a,b]\times W_a \to W$ such that $\pi_1 \circ h = \pi_1$ and
\begin{equation}
\label{cond-2}
h([a,b]\times W_a^{--})=W^{--}.
\end{equation}
\end{itemize}
\end{definition}
\begin{definition}
Let $W$ be a segment over $[a,b]$. It is called periodic if
\begin{equation*}
(W_a,W_a^{--})=(W_b,W_b^{--}).
\end{equation*}
\end{definition}
\begin{definition}
For a periodic segment $W$, we define the corresponding monodromy map $m$ as follows
\begin{equation*}
m\colon W_a\to W_a, \quad m(x) = \pi_2 h(b,\pi_2 h^{-1}(a,x)).
\end{equation*}
\end{definition}
\begin{theorem} 
\label{main-th}(See
\cite{srzednicki2005fixed}) Let W be a periodic segment over $[a,b]$. Then the set
\begin{equation*}
U = \{ x_0 \in W_a \colon x(a,x_0,t) \in W_t\setminus W_t^{--}\,\mbox{for all}\,\, t \in [a,b] \}
\end{equation*}
is open in $W_a$ and the set of fixed points of the restriction $x(a,\cdot, b)|_U \colon U \to W_a$ is compact. Moreover, if $W$ and $W^{--}$ are ANRs then
\begin{equation*}
\mathrm{ind}(x(a,\cdot, b)|_U) = \Lambda(m) - \Lambda(m|_{W_a^{--}}).
\end{equation*}
Where by $\Lambda(m)$ and $\Lambda(m|_{W_a^{--}})$ we denote the Lefschetz number of $m$ and $m|_{W_a^{--}}$ respectively. In particular, if $\Lambda(m) - \Lambda(m|_{W_a^{--}}) \ne 0$ then $x(a,\cdot, b)|_U$ has a fixed point in $W_a$.
\end{theorem}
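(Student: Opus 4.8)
The plan is to replace the non-autonomous dynamics inside $W$ by a single continuous self-map of $W_a$ and then to compute its fixed-point index by Lefschetz theory. I work in the extended phase space $\mathbb{R}\times M$ with the autonomous field $(1,v)$, whose local flow is $\phi_s(t_0,x_0)=(t_0+s,x(t_0,x_0,t_0+s))$. Since $W\subset[a,b]\times M$, every forward orbit leaves $W$ no later than the instant its $\mathbb{R}$-coordinate reaches $b$, so the exit time $\sigma(w)=\sup\{s\ge 0:\phi_{[0,s]}(w)\subset W\}$ is finite and bounded on $W$. Because $W^{--}$ is the \emph{essential} exit set --- meeting it forces immediate departure --- the exit time of $(a,x_0)$ equals the first time the orbit of $(a,x_0)$ meets $W^{--}$ (or $b-a$ if it never does); continuous dependence together with compactness of $W$ and $W^{--}$ then makes this first-hitting-time function, hence $\sigma$ and the exit point $e(a,x_0):=\phi_{\sigma(a,x_0)}(a,x_0)\in W^-$, depend continuously on $x_0\in W_a$. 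Finally, $W_a$ and $W_a^{--}$ are ANRs, equivalently to the hypothesis of the theorem, since $h$ identifies $W$ with $[a,b]\times W_a$ and $W^{--}$ with $[a,b]\times W_a^{--}$.

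First I would check that $U$ is open. If $x_0\in U$, the orbit arc $\Gamma=\{(t,x(a,x_0,t)):t\in[a,b]\}$ is compact and disjoint from the compact set $W^{--}$, hence at positive distance from it; by continuous dependence every solution starting near $x_0$ exists on $[a,b]$ and stays uniformly near $\Gamma$, so it also avoids $W^{--}$, and since $W^-\cap([a,b)\times M)\subset W^{--}$ it cannot escape $W$ before $t=b$ --- thus a whole neighbourhood of $x_0$ in $W_a$ lies in $U$; the same argument yields $W_a^{--}\cap\overline U=\varnothing$. Next, the fixed-point set of $P:=x(a,\cdot,b)|_U$ is compact: if $x_0^{(n)}\in\mathrm{Fix}(P)$ and $x_0^{(n)}\to x_0\in W_a$, then the orbits of $(a,x_0^{(n)})$ lie in the compact set $W$ and solve $\dot x=v(t,x)$, so by Arzel\`a--Ascoli a subsequence converges uniformly to a solution $\gamma$ on $[a,b]$ with $\gamma\subset W$ and $\gamma(a)=x_0=\gamma(b)$; were $\gamma$ to meet $W^{--}$ at some $t^*<b$ it would leave $W$ at once, and were it to meet $W^{--}$ only at $t^*=b$ periodicity would give $x_0=\gamma(a)\in W_a^{--}$, again forcing $\gamma$ to leave $W$ at once --- both impossible, so $\gamma$ avoids $W^{--}$, whence $x_0\in U$, $x_0\in\mathrm{Fix}(P)$, and $\mathrm{Fix}(P)$ is closed in the compactum $W_a$.

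The heart of the matter is the index formula. Define $\bar P\colon W_a\to W_a$ by $\bar P(x_0)=\pi_2\,h\big(b,\pi_2\,h^{-1}(e(a,x_0))\big)$: one follows the orbit of $(a,x_0)$ to its exit point and then ``slides it up to time $b$'' along the fibres of the product structure $h$ of $W$. Then $\bar P$ is continuous (by continuity of $e$ and $h^{\pm1}$); $\bar P|_U=P$, since orbits from $U$ exit only at the top at a point of $W_b\setminus W_b^{--}$, where the slide is the identity of $W_b$; $\bar P|_{W_a^{--}}=m|_{W_a^{--}}$, since orbits from $W_a^{--}$ exit instantly; and every fixed point of $\bar P$ lies in $U\cup W_a^{--}$, because whenever the orbit meets $W^{--}$ (possibly at $t=b$) one has $\bar P(x_0)\in W_b^{--}=W_a^{--}$. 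Moreover $\bar P$ is homotopic to $m$ through $G_s(x_0)=\pi_2\,h\big(b,\pi_2\,h^{-1}(\phi_{s\,\sigma(a,x_0)}(a,x_0))\big)$, a homotopy that is constant on $W_a^{--}$; hence $\Lambda(\bar P)=\Lambda(m)$.

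It remains to localize the index near $W_a^{--}$. By the compactness results, in a small enough neighbourhood of $W_a^{--}$ the only fixed points of $\bar P$ are those of $m|_{W_a^{--}}$; choose an open set $N'\supset W_a^{--}$ with $\overline{N'}$ disjoint from $\mathrm{Fix}(P)$ and from $\overline U$. Since $W_a^{--}$ is a closed ANR in the ANR $W_a$, it has a neighbourhood retraction $\rho\colon N\to W_a^{--}$ with $N\supseteq\overline{N'}$; shrinking $N$, one makes $\rho$ so $C^0$-close to the inclusion that $\rho\circ\bar P$ and $\bar P$ are joined on $N'$ by a homotopy with no fixed points on $\partial N'$, whence $\mathrm{ind}(\bar P,N')=\mathrm{ind}(\rho\circ\bar P,N')=\Lambda\big((\rho\circ\bar P)|_{W_a^{--}}\big)=\Lambda(m|_{W_a^{--}})$, the middle equality being the standard localization $\mathrm{ind}(g,V)=\Lambda(g|_A)$ valid when $g(V)\subset A\subset V$ with $A$ a compact ANR. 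Combining this with the Lefschetz--Hopf equality $\mathrm{ind}(\bar P,W_a)=\Lambda(\bar P)=\Lambda(m)$, with additivity of the index over the separated compacta $\mathrm{Fix}(P)\subset U$ and $\mathrm{Fix}(\bar P)\cap N'\subset W_a^{--}$, and with excision together with $\bar P|_U=P$, one obtains
\[
\Lambda(m)=\mathrm{ind}\big(x(a,\cdot,b)|_U\big)+\Lambda(m|_{W_a^{--}}),
\]
which is the claimed identity; the final assertion is then immediate, since a nonzero fixed-point index forces a fixed point. Everything before this paragraph is soft --- continuous dependence, compactness, and the Wa\.zewski-type continuity of the exit map, which rests precisely on $W$ and $W^-$ being compact --- and I expect the genuine work to be here, in the index bookkeeping near $W_a^{--}$, which is where the ANR hypotheses on $W$ and $W^{--}$ are indispensable.
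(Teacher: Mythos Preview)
The paper does not prove this theorem at all; it is quoted with attribution to \cite{srzednicki2005fixed} and then used as a black box in the proof of Theorem~\ref{ourres}. There is therefore no ``paper's own proof'' to compare your attempt against.

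For what it is worth, your outline follows the standard Srzednicki--W\'ojcik--Zgliczy\'nski strategy from the cited reference: extend the Poincar\'e-type map $P$ to a continuous $\bar P\colon W_a\to W_a$ by flowing to the exit point and then sliding along the $h$-fibres to level $b$; observe that $\bar P$ agrees with $P$ on $U$ and with $m$ on $W_a^{--}$; homotope $\bar P$ to $m$ by scaling the flow time; and finally split $\Lambda(m)=\mathrm{ind}(\bar P,W_a)$ by additivity over the two compact fixed-point components, handling the contribution near $W_a^{--}$ via an ANR neighbourhood retraction. The soft parts (openness of $U$, compactness of $\mathrm{Fix}(P)$, continuity of the exit map as a consequence of the Wa\.zewski block condition) are handled correctly, and you are right that the only genuinely delicate step is the index localization near $W_a^{--}$, which is exactly where the ANR hypothesis earns its keep.
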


\subsection{Main result}

\begin{theorem}
\label{ourres}
If for some functions $h_1, h_2 \in C^2(\mathbb{R}/T\mathbb{Z},\mathbb{R})$, $h_1(t) < h_2(t)$, $|\dot h_1| < 1$, $|\dot h_2| < 1$ for all $t \in \mathbb{R}/T\mathbb{Z}$, the following conditions are satisfied
\begin{equation}
\label{maincond}
\begin{aligned}
&(1-\dot h_1^2)^{3/2}f(t,h_1,\dot h_1) - \ddot h_1 < 0,\\
&(1-\dot h_2^2)^{3/2}f(t,h_2,\dot h_2) - \ddot h_2 > 0,
\end{aligned}
\end{equation}
then there exists a periodic solution $q \colon \mathbb{R}/T\mathbb{Z} \to \mathbb{R}$ of (\ref{maineq1}). Moreover, $h_1(t) < q(t) < h_2(t)$ for all $t \in \mathbb{R}/T\mathbb{Z}$.
\end{theorem}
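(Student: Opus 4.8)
The plan is to rewrite (\ref{maineq1}) as a first-order system, to exhibit the region between the graphs of $h_1$ and $h_2$ as a periodic segment in the sense of the definitions above, and to apply Theorem~\ref{main-th}. \emph{Reduction.} Put $u=\dot q$; since $\frac{d}{dt}\frac{\dot q}{\sqrt{1-\dot q^2}}=(1-\dot q^2)^{-3/2}\ddot q$, equation (\ref{maineq1}) is, on $\{|u|<1\}$, equivalent to $\dot q=u$, $\dot u=(1-u^2)^{3/2}f(t,q,u)$. I would extend the right-hand side to all of $\mathbb{R}$ in $u$ by replacing $(1-u^2)^{3/2}$ with $(\max\{1-u^2,0\})^{3/2}$, which keeps the field $C^1$ on $\mathbb{R}\times\mathbb{R}^2$ and makes the lines $u=\pm1$ invariant; a $T$-periodic solution of the extended system with $|u|\le1$ then automatically has $|u|<1$ (otherwise $u\equiv\pm1$ and $q(t)=q(0)\pm t$ is not periodic), hence solves (\ref{maineq1}). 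So it suffices to find a $T$-periodic solution of the extended system with $h_1(t)<q(t)<h_2(t)$.

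\emph{The segment.} Take $W=\{(t,q,u)\in[0,T]\times\mathbb{R}^2:\ h_1(t)\le q\le h_2(t),\ |u|\le1\}$, so each fibre $W_t=[h_1(t),h_2(t)]\times[-1,1]$ is a rectangle and, by $T$-periodicity of $h_1,h_2$ (hence of $\dot h_1,\dot h_2$), the pairs $(W_0,W_0^{--})$ and $(W_T,W_T^{--})$ coincide. I claim the exit set is
\[
W^-=W^{--}\cup(\{T\}\times W_T),\qquad W^{--}=\{q=h_1(t),\ u\le\dot h_1(t)\}\cup\{q=h_2(t),\ u\ge\dot h_2(t)\}.
\]
On the face $q=h_1(t)$ one has $\frac{d}{dt}(q-h_1)=u-\dot h_1$, so a solution leaves $\{q\ge h_1\}$ when $u<\dot h_1(t)$ and enters when $u>\dot h_1(t)$; on the tangency line $u=\dot h_1(t)$, the first inequality in (\ref{maincond}) gives $\frac{d^2}{dt^2}(q-h_1)=(1-\dot h_1^2)^{3/2}f(t,h_1,\dot h_1)-\ddot h_1<0$, so the solution still leaves $W$. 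This is the crucial point: it is precisely the strict inequalities (\ref{maincond}) that make $W^-$ closed (hence $W$ a block) and give $W^{--}$ the shape above. The face $q=h_2(t)$ is symmetric via the second inequality in (\ref{maincond}), while the invariant faces $u=\pm1$ contribute to $W^-$ only the two corner lines already contained in $W^{--}$.

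\emph{Product structure, monodromy, index.} Since $W_t$ is a rectangle, $W$ is fibrewise homeomorphic to $[0,T]\times W_0$; the only subtlety is that the homeomorphism $h$ must carry $W_0^{--}$ onto $W^{--}$, i.e.\ it must move the tangency level $u=\dot h_i(0)$ at time $0$ to $u=\dot h_i(t)$ at time $t$. One obtains such an $h$ by composing the affine map $q\mapsto h_1(t)+\frac{h_2(t)-h_1(t)}{h_2(0)-h_1(0)}(q-h_1(0))$ with a $(t,q)$-dependent piecewise-affine self-homeomorphism of $[-1,1]$ that fixes $\pm1$ and sends $g(0,s)\mapsto g(t,s)$, where $s\in[0,1]$ is the normalized $q$-coordinate and $g(t,s)=(1-s)\dot h_1(t)+s\dot h_2(t)$ (note $|g|<1$). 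At $t=0$ and $t=T$ this $h$ is the identity on $W_0$, so $W$ is a periodic segment with monodromy $m=\mathrm{id}_{W_0}$. As $W\cong[0,T]\times W_0$ and $W^{--}\cong[0,T]\times W_0^{--}$ are finite polyhedra, hence ANRs, Theorem~\ref{main-th} applies, and since $W_0$ is contractible while $W_0^{--}$ is a disjoint union of two arcs,
\[
\Lambda(m)-\Lambda(m|_{W_0^{--}})=\chi(W_0)-\chi(W_0^{--})=1-2=-1\ne0 .
\]

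\emph{Conclusion.} Theorem~\ref{main-th} now gives $x_0\in W_0$ fixed by the Poincaré map $x(0,\cdot,T)$ with $x(0,x_0,t)\in W_t\setminus W_t^{--}$ for all $t\in[0,T]$; by $T$-periodicity of the system this is a $T$-periodic solution $(q,u)$ with $h_1(t)\le q(t)\le h_2(t)$. To upgrade to strict inequality, note that if $q(t_0)=h_1(t_0)$ then, the point lying outside $W_{t_0}^{--}$, necessarily $u(t_0)>\dot h_1(t_0)$, so $q-h_1$ is strictly increasing at $t_0$ and is negative just before $t_0$ (using periodicity of the orbit if $t_0=0$), contradicting $x(0,x_0,t)\in W_t$ for $t\in[0,T]$; symmetrically $q<h_2$. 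By the reduction step, $q$ solves (\ref{maineq1}). The main obstacle is the verification that the exit set equals the $W^{--}$ above — where (\ref{maincond}) enters through the second-order touching argument and is exactly what is needed — together with the construction of a fibrewise homeomorphism compatible with the moving tangency lines; the Lefschetz computation and the passage to a strict periodic solution are then routine.
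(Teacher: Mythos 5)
Your proposal is correct and follows essentially the same route as the paper: the same first-order reduction with the vector field extended to make $u=\pm1$ invariant, the same block $W$ between the graphs of $h_1,h_2$, the same identification of $W^{--}$ via the tangency/second-derivative use of (\ref{maincond}), the same fibrewise (piecewise-affine in $u$) homeomorphism giving $m=\mathrm{id}$, and the same computation $\chi(W_0)-\chi(W_0^{--})=-1$. You even supply two details the paper leaves implicit (ruling out $|u|\equiv1$ and upgrading $h_1\le q\le h_2$ to strict inequalities), so nothing further is needed.
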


\begin{proof}
First, rewrite (\ref{maineq1}) as follows
\begin{equation}
\label{maineq2}
\begin{aligned}
&\dot q = p,\\
&\dot p = (1 - p^2)^{3/2}\cdot f(t,q,p).
\end{aligned}
\end{equation}
In order to be precise in our further consideration, in addition to the above system, we will assume that $\dot p = 0$ for $|p| > 1$. Systems~(\ref{maineq1}) and~(\ref{maineq2}) are equivalent provided $|p| < 1$. Consider the following subset of the extended phase space
\begin{equation*}
W = \{ t \in [0,T], q \in \mathbb{R}, p \in [-1, 1] \colon h_1(t) \leqslant q \leqslant h_2(t)\}.
\end{equation*}
Let us show now that for~(\ref{maineq2}) the essential exit set can be described as follows
\begin{equation*}
\begin{aligned}
W^{--} =&\{ t \in [0,T], q \in \mathbb{R}, p \in [-1, 1] \colon q = h_1(t), -1 \leqslant p \leqslant \dot h_1(t) \}\\
\cup&\{ t \in [0,T], q \in \mathbb{R}, p \in [-1, 1] \colon q = h_2(t), \dot h_2(t) \leqslant p \leqslant 1 \}.
\end{aligned}
\end{equation*}
Indeed, manifolds $p = 1$ and $p = -1$ are invariant for~(\ref{maineq2}) and, taking into account that $\dot q = p$, we have
\begin{equation*}
\begin{aligned}
\{ t \in [0,T], q &\in \mathbb{R}, p \in [-1, 1] \colon q = h_1(t), -1 \leqslant p < \dot h_1(t) \}\\
\cup&\{ t \in [0,T], q \in \mathbb{R}, p \in [-1, 1] \colon q = h_2(t), \dot h_2(t) < p \leqslant 1 \} \subset W^{--},\\
\{ t \in [0,T], q &\in \mathbb{R}, p \in [-1, 1] \colon q = h_1(t), \dot h_1(t) < p \leqslant 1 \}\\
\cup&\{ t \in [0,T], q \in \mathbb{R}, p \in [-1, 1] \colon q = h_2(t), -1 \leqslant p < \dot h_2(t) \} \not\subset W^{--}.
\end{aligned}
\end{equation*}
For any $t_0 \in [0, T]$, $q_0 = h_1(t_0)$, $p_0 = \dot h_1(t_0)$ we have
\begin{equation*}
\begin{aligned}
q(t_0,q_0,p_0,t) - h_1(t) = \frac{(t-t_0)^2}{2}((1-\dot h_1^2(t_0))^{3/2}f(t_0,h_1(t_0),\dot h_1(t_0)) - \ddot h_1(t_0)) + o((t-t_0)^2).
\end{aligned}
\end{equation*}
Therefore, from~(\ref{maincond}) we have that $(t_0, q_0, p_0) \in W^{--}$. By the same arguments, for any $t_0 \in [0, T]$, $q_0 = h_2(t_0)$, $p_0 = \dot h_2(t_0)$, we have  $(t_0, q_0, p_0) \in W^{--}$.

Now define a homeomorphism $h \colon [0, T] \times W_0 \to W$ as follows
\begin{equation*}
h \colon (t,q,p) \mapsto \left(t, \frac{h_1(t) - h_2(t)}{h_1(0) - h_2(0)}(q - h_1(0)) + h_1(t), L_1(p)\frac{h_2(0) - q}{h_2(0) - h_1(0)} + L_2(p)\frac{q - h_1(0)}{h_2(0)-h_1(0)} \right),
\end{equation*}
where
\begin{equation*}
L_i(p) = \begin{cases} \displaystyle{\frac{\dot h_i(t) + 1}{\dot h_i(0) + 1}p + \frac{\dot h_i(t) - \dot h_i(0)}{\dot h_i(0) + 1}} &\mbox{if } p \in [-1, \dot h_i(0)], \\ 
\displaystyle{\frac{1 - \dot h_i(t)}{1 - \dot h_i(0)}p + \frac{\dot h_i(t) - \dot h_i(0)}{1 - \dot h_i(0)}} & \mbox{if } p \in (\dot h_i(0), 1], \end{cases}\quad i = 1,2.
\end{equation*}
If we define $h$ in such a manner, then $m(q,p) = \pi_2 h(T, \pi_2 h^{-1}(0,q,p)) = \mathrm{id}_{W_0}$. Therefore, we obtain $\Lambda(m) = \chi(W_0)$, $\Lambda(m|_{W_0^{--}}) = \chi(W_0^{--})$.

\begin{figure}[!h]
\centering
\def\svgwidth{400px}
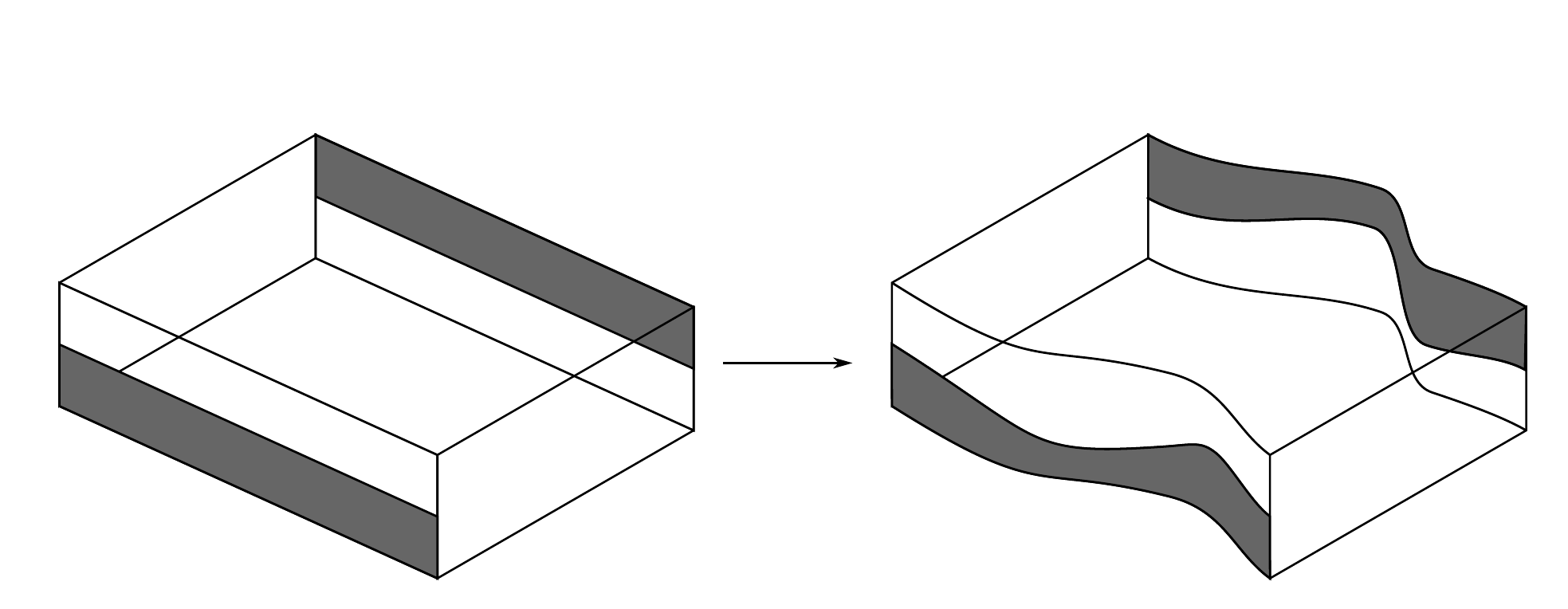
\caption{Qualitative representation of the periodic segment $W$ and a corresponding homeomorphism $h$. $[0,T] \times W^{--}_0$ and $W^{--}$ are in grey.}
\end{figure}

In our case $W_0$ is homeomorphic to a two-dimensional disk and $W_0^{--}$ is homeomorphic to two disjoint closed line segments. From the above, we obtain
\begin{equation*}
\Lambda(m) - \Lambda(m|_{W_0^{--}}) = \chi(W_0) - \chi(W_0^{--}) = -1,
\end{equation*}
and we can apply~(\ref{main-th}).
\end{proof}

\subsection{Applications}
\begin{example} Let us consider a forced relativistic pendulum in a gravitational field. Assume that its motion is described by the following equation
\begin{equation}
\label{exam1}
\frac{d}{dt}\frac{\dot \varphi}{\sqrt{1 - \dot \varphi^2}} = f(t, \varphi, \dot \varphi) + \alpha\sin\varphi,
\end{equation}
where $\alpha > 0$, and the function $f \in C^1(\mathbb{R}/T\mathbb{Z} \times \mathbb{R}/2\pi\mathbb{Z} \times \mathbb{R}, \mathbb{R})$, $T>0$ satisfies the following conditions
\begin{equation}
\label{ex1cond}
f(t, -\frac{\pi}{2}, 0) < \alpha, \quad f(t, \frac{\pi}{2}, 0) > -\alpha, \quad\mbox{for all}\quad t \in [0,T].
\end{equation}
Then there exists at least one periodic solution of~(\ref{exam1}). Moreover, along  this periodic solution the pendulum never approaches the horizontal position, i.e. it never falls. The last can be considered as an example of counter-intuitive behaviour since the pendulum is moving in the gravitational field and the external force $f$ can be arbitrarily large.

Let $h_1 = -\pi/2$ and $h_2 = \pi/2$, then, taking into account~(\ref{ex1cond}), we have
\begin{equation*}
\begin{aligned}
&f(t,h_1,\dot h_1) + \alpha\sin h_1 = f(t, -\frac{\pi}{2},0) - \alpha < 0,\\
&f(t,h_2,\dot h_2) + \alpha\sin h_2 = f(t, \frac{\pi}{2},0) + \alpha > 0,
\end{aligned}
\end{equation*}
and theorem~(\ref{ourres}) can be applied.
\end{example}
\begin{remark}
One can easily show that there also exists at least one periodic solution in the above system in the presence of viscous friction. Indeed, if for a given $\gamma \in \mathbb{R}$ we consider the modified equation
\begin{equation}
\label{exam1mod}
\frac{d}{dt}\frac{\dot \varphi}{\sqrt{1 - \dot \varphi^2}} = f(t, \varphi, \dot \varphi) - \gamma \dot\varphi + \alpha\sin\varphi,
\end{equation}
then conditions~(\ref{maincond}) are still satisfied for $h_1 = -\pi/2$ and $h_2 = \pi/2$.
\end{remark}
\begin{example}
Previous example can be generalized to the case of a forced relativistic particle moving in a gravitational field on a  curve. Suppose that $q \in \mathbb{R}$ is a natural parameter on a connected curve in $\mathbb{R}^2$. Then the equation of motion of this system is similar to~(\ref{exam1mod})
\begin{equation}
\label{exam2}
\frac{d}{dt}\frac{\dot q}{\sqrt{1 - \dot q^2}} = f(t, q, \dot q) - \gamma \dot q - \alpha y'(q),
\end{equation}
where $\gamma \in \mathbb{R}$ is a given parameter describing friction-like interaction (possibly zero), $\alpha >0$, and $y \in C^2(\mathbb{R}, \mathbb{R})$, $f \in C^1(\mathbb{R}/T\mathbb{Z} \times \mathbb{R} \times \mathbb{R}, \mathbb{R})$, $T>0$.
Similarly to the previous case, we assume that there are $q_1, q_2 \in \mathbb{R}$, $q_1 < q_2$ such that $y'(q_1) = 1$, $y'(q_2) = -1$, and $y(q) > 0$ for $q \in (q_1, q_2)$.  satisfies the following conditions
\begin{equation*}
f(t, q_1, 0) < \alpha, \quad f(t, q_2, 0) > -\alpha, \quad\mbox{for all}\quad t \in [0,T].
\end{equation*} 
Then there exists at least one $T$-periodic solution of~(\ref{exam2}) such that $q(t) \in (q_1, q_2)$ for all $t \in \mathbb{R}$. In particular, the point never approaches $y = 0$ along the obtained periodic solution, i.e. it never falls on the considered horizontal line. The proof in this case is similar to the case of a relativistic pendulum.
\end{example}
\begin{example}
Consider a relativistic pendulum with viscous friction (possibly zero) in a parallel force field. We assume that this external field is rotating and changing its absolute value in accordance with the prescribed laws $\psi \in C^2(\mathbb{R}/T\mathbb{Z}, \mathbb{R})$, $f \in C^1(\mathbb{R}/T\mathbb{Z}, \mathbb{R})$, $T > 0$.
\begin{figure}[!h]
\centering
\def\svgwidth{320px}
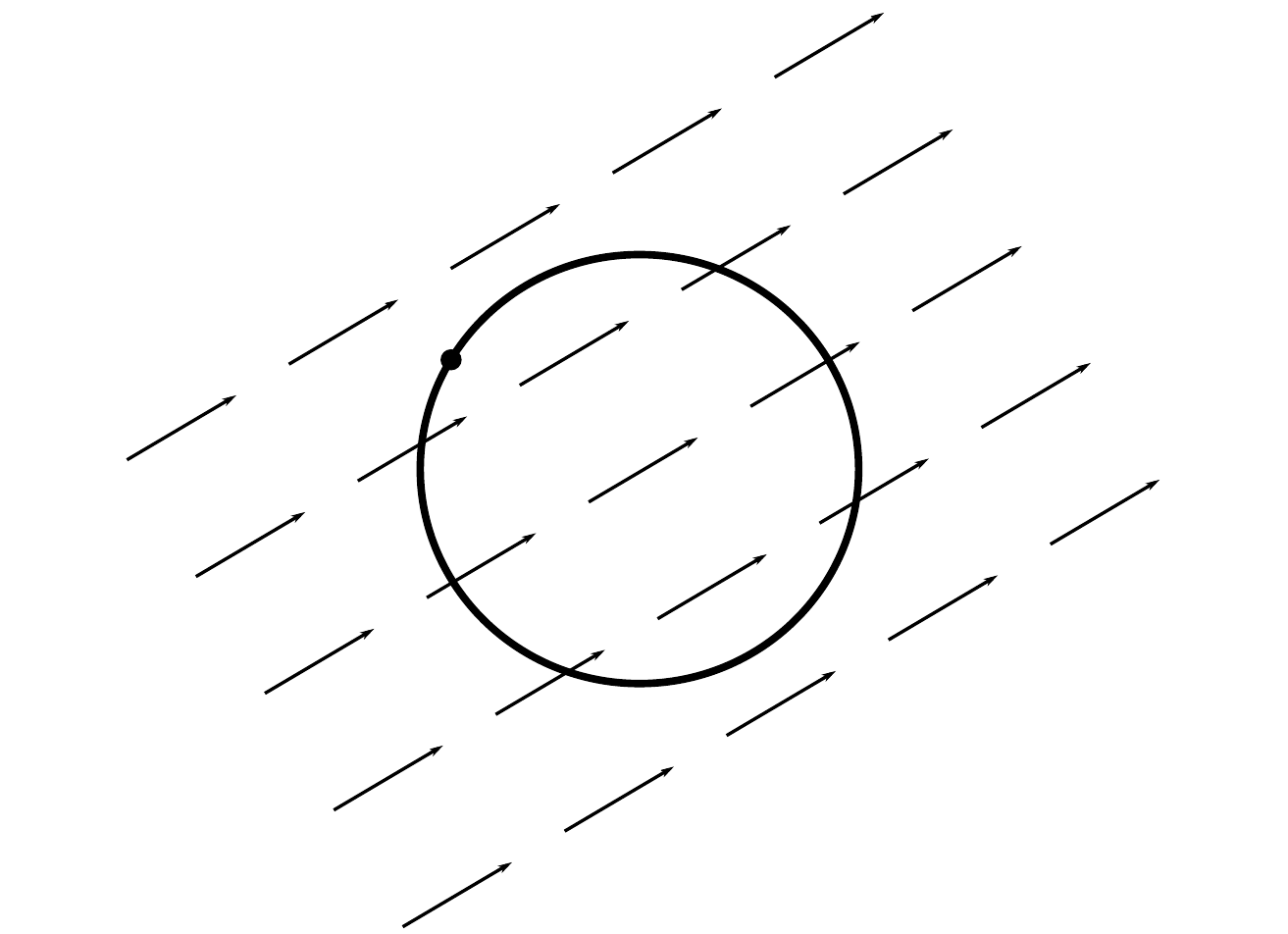
\caption{Relativistic pendulum in the rotating force field}
\end{figure}

\noindent The equation of motion is as follows
\begin{equation}
\label{exam3}
\frac{d}{dt}\frac{\dot \varphi}{\sqrt{1 - \dot \varphi^2}} = f(t)(\cos \varphi \sin\psi(t) -\sin \varphi \cos\psi(t)) - \gamma\dot\varphi,
\end{equation}
where $\gamma \in \mathbb{R}$. Assume that $|\dot\psi(t)| < 1$ and $f(t) > |\ddot\psi|(1 - \dot\psi^2)^{-3/2} + |\gamma\dot\psi|$ for all $t \in \mathbb{R}/T\mathbb{Z}$. Then there exists a $T$-periodic solution $\varphi$ such that $\varphi(t) \in (\psi(t) + \pi/2, \psi(t) + 3\pi/2)$, i.e. the particle follows the rotation of the force field, in particular, it performs the same number of revolutions around the pivot point of the pendulum as the force field.

To prove this result one can consider functions $h_1 = \psi + \pi/2$, $h_2 = \psi + 3\pi/2$ and apply~(\ref{ourres}) to~(\ref{exam3})
\begin{equation*}
\begin{aligned}
&(1 - \dot h_1^2)^{3/2}(f-\gamma\dot h_1)(\cos h_1 \sin\psi -\sin h_1 \cos\psi) - \ddot h_1 = -(1 - \dot\psi^2)^{3/2}(f+\gamma\dot\psi) - \ddot\psi < 0,\\
&(1 - \dot h_2^2)^{3/2}(f-\gamma\dot h_2)(\cos h_2 \sin\psi -\sin h_2 \cos\psi) - \ddot h_2 = (1 - \dot\psi^2)^{3/2}(f-\gamma\dot\psi) - \ddot\psi > 0.
\end{aligned}
\end{equation*}
\end{example}
\section{Conclusion}
\label{sec3}


We believe that the presented result can be useful in different applications in systems describing relativistic approximation of a constrained massive particle moving under the action of an external force which is possibly depends on time and the generalized coordinate and velocity of the particle. Theorem~(\ref{ourres}), if can be applied, allows one not only to prove the existence of a periodic solution, but also estimate it. At the same time, we have to mention that this constructive approach can be considered also as a shortcoming of the method comparing, for instance, to results \cite{torres2008periodic}, \cite{brezis2010periodic}, and~\cite{bereanu2012existence}, which can be used in a straightforward manner.




\bibliographystyle{elsarticle-num}


\bibliography{sample}

\end{document}